\documentclass[12pt]{article}
\usepackage{amsmath,amssymb,amsthm,mathtools}
\usepackage{fourier}
\usepackage{tikz}

\theoremstyle{plain}
\newtheorem{theorem}{Theorem}
\newtheorem{lemma}[theorem]{Lemma}
\newtheorem*{theorem*}{Theorem}

\newcommand{\ka}{\kappa}

\begin{document}

\title{Amicable lattice rhombuses are equable}
\author{Bohdan Biekietov \and Iwan Praton \and Weiran Zeng}
\date{}
\maketitle

\begin{abstract}
A polygon is equable if its area is equal to its perimeter. A pair of polygons is an amicable pair if the area of the first is equal to the perimeter of the second, and vice versa. A polygon is a lattice polygon if its vertices lie on the integer lattice. We show that amicable lattice rhombuses are actually equable. 
\end{abstract}

\begin{section} {Introduction}
A polygon is \emph{equable} if its area is equal to its perimeter. Two polygons form an \emph{amicable pair} if the area of one is equal to the perimeter of the other, and vice versa. Slightly abusing terminology, we say that a polygon is \emph{amicable} if it is one of an amicable pair. Without further conditions, it is easy to find many equable and amicable polygons, so we usually restrict the kind of polygons we consider. In this paper we concentrate on lattice polygons, i.e., polygons whose vertices lie on the integer lattice.

It has long been known that there are only five equable lattice triangles (see the proof in the appendix of \cite{AC1}). It turns out there is only one pair of amicable lattice triangles \cite{PS}. There are two equable lattice rectangles (one of which is a square), and there are five pairs of amicable lattice rectangles \cite{PZ}. In this paper we take a look at rhombuses. There are two equable rhombuses on the integer lattice (\cite{AC1}, Corollary 1(a)); in this paper we investigate \emph{amicable} rhombuses instead.

We first need to be more explicit on what we mean by an amicable pair. An equable polygon paired with itself forms an amicable pair, but it's a trivial pair. 
In this paper, we allow such trivial pairs since they arise from nontrivial solutions to the equations we use below. Our results show that all amicable lattice rhombuses are actually equable. 

We first note some general properties of amicable parallelograms on the integer lattice. Recall that the side lengths of an amicable polygon are integers (Lemma 2 in \cite{PZ}). Suppose we have an amicable pair of parallelograms, one with side lengths $x$ and $y$, and the other with side lengths $a$ and $b$, where $x,y,a,b$ are positive integers. These side lengths completely determine the two parallelograms: if $\theta$ is an interior angle of the first parallelogram, then its area is $xy\sin \theta$, which is equal to $2(a+b)$, so $\sin \theta = 2(a+b)/(xy)$. The angle of the second parallelogram is also determined by $x,y,a,b$. Thus it suffices to describe an amicable pair by providing their side lengths.

\end{section}

\begin{section}{Results}
In their study of equable lattice parallelograms, Aebi and Cairns start by proving a necessary and sufficient condition for a lattice parallelogram to be equable. Somewhat surprisingly, a very minor modification also works for amicable lattice parallelograms. 

\begin{theorem}
Let $x, y, a, b$ be positive integers. Then there exists an amicable pair of lattice parallelograms, one with sides $x$, $y$ and the other with sides $a$, $b$, if and only if both $x^2y^2 -4(a+b)^2$ and $a^2b^2 -4(x+y)^2$ are square integers.
\end{theorem}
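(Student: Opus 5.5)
Write the first parallelogram with one vertex at the origin and edge vectors $u=(p,q)$ and $v=(r,s)$ in $\mathbb{Z}^2$. Then $|u|=x$, $|v|=y$, and the area is $|ps-qr|$. The plan is to translate the amicable conditions into conditions on these vectors and to use the Lagrange identity
\[
(p^2+q^2)(r^2+s^2)=(ps-qr)^2+(pr+qs)^2 .
\]
This is the same approach Aebi and Cairns use for equable lattice parallelograms; the only change is that the area is now matched to the other parallelogram's perimeter.

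\emph{Necessity.} Suppose an amicable pair exists. The side lengths $x,y,a,b$ are integers by Lemma 2 of \cite{PZ}. The area of the first parallelogram equals the perimeter of the second, so $|ps-qr|=2(a+b)$. The Lagrange identity then gives
\[
x^2y^2-4(a+b)^2=(pr+qs)^2 ,
\]
which is the square of an integer. The same argument applied to the second parallelogram shows that $a^2b^2-4(x+y)^2$ is a square.

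\emph{Sufficiency.} This is the main step: we must actually realize each parallelogram on the lattice. Suppose $x^2y^2-4(a+b)^2=c^2$ with $c\ge 0$ an integer. We want lattice vectors $u,v$ with
\[
|u|=x,\quad |v|=y,\quad u\cdot v=c,\quad \det(u,v)=2(a+b).
\]
The simplest choice is $u=(x,0)$, which is always a lattice vector. The conditions on $v$ then become $xr=c$ and $xs=2(a+b)$. So $v=(c/x,\,2(a+b)/x)$, and we need $x\mid c$ and $x\mid 2(a+b)$.

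Divisibility generally fails, and I expect this to be the main obstacle. My first attempt is to use a Gaussian-integer factorization instead of placing $u$ on the axis. Set $z=c+2(a+b)i$. Then $|z|^2=x^2y^2$, so we look for Gaussian integers $\alpha,\beta$ with $|\alpha|=x$, $|\beta|=y$ and $\bar\alpha\beta=z$, and take $u=\alpha$, $v=\beta$. Since $N(z)=x^2y^2$, we can try to split the prime factors of $z$ between $\alpha$ and $\beta$ so that $N(\alpha)=x^2$. This is not always possible directly. If it fails, the fallback is to allow $u$ to be any integer vector of length $x$. A parallelogram with sides $x,y$ and the prescribed angle is only determined up to rotation, so we only need some rotation that puts both $u$ and $v$ on the lattice.

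Failing a general argument, I would imitate Aebi--Cairns's proof for the equable case. Their construction only uses the fact that $x^2y^2-A^2$ is a square, where $A$ is the area, and it then produces lattice vertices. That argument should carry over verbatim with $A=2(a+b)$ in place of $2(x+y)$, and the same holds for the second parallelogram with $A=2(x+y)$. This matches the paper's remark that ``a very minor modification'' suffices. The resulting two lattice parallelograms have areas $2(a+b)$ and $2(x+y)$, which are exactly the perimeters of the other, so they form an amicable pair.
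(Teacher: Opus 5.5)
Your necessity argument is correct. The Lagrange identity gives $x^2y^2-4(a+b)^2=(pr+qs)^2$ at once; this is a cleaner algebraic form of the paper's short-diagonal computation $d^2=x^2+y^2-2|u\cdot v|$.

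The genuine gap is sufficiency, which you rightly call the main step but never carry out. Your last paragraph only asserts that the Aebi--Cairns construction ``should carry over verbatim.'' No lattice vectors with $|u|=x$, $|v|=y$, $\det(u,v)=2(a+b)$ are ever produced. The remark that ``some rotation'' puts both $u$ and $v$ on the lattice assumes exactly what must be shown. You also abandon the Gaussian-integer route on the claim that the splitting ``is not always possible directly.'' That claim is false, and the route finishes the proof.

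To close the gap you have two options. The first is to follow the paper: build the triangle with sides $x$, $y$, $d_1=\sqrt{x^2+y^2+2c}$, use the law of cosines to see that its area is $\frac12xy\sin\theta=a+b$, realize it on the lattice by Yiu's result on geodetic triangles, and double it.

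The second is to complete your own idea using unique factorization in $\mathbb{Z}[i]$. Write $\nu_\ell$ for the exponent of a rational prime $\ell$, and let $z=c+2(a+b)i$, so $N(z)=x^2y^2$. Build $\bar\alpha$ prime by prime:
\begin{itemize}
\item An inert prime $\ell\equiv 3\pmod 4$ divides $z$ exactly $\nu_\ell(x)+\nu_\ell(y)$ times, so put $\ell^{\nu_\ell(x)}$ into $\bar\alpha$.
\item For a split prime $\ell=\gamma\bar\gamma\equiv 1\pmod 4$, suppose $\gamma^e\bar\gamma^f$ exactly divides $z$. Then $e+f=2\nu_\ell(x)+2\nu_\ell(y)$, so you can choose $0\le e_1\le e$ and $0\le f_1\le f$ with $e_1+f_1=2\nu_\ell(x)$. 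Put $\gamma^{e_1}\bar\gamma^{f_1}$ into $\bar\alpha$.
\item For $\ell=2$, put $(1+i)^{2\nu_2(x)}$ into $\bar\alpha$.
\end{itemize}
Then $\bar\alpha\mid z$ and $N(\alpha)=x^2$, so $\beta=z/\bar\alpha$ is a Gaussian integer with $N(\beta)=y^2$. Writing $\alpha=p+qi$ and $\beta=r+si$, the identity $\bar\alpha\beta=(pr+qs)+i(ps-qr)=z$ says that $u=(p,q)$ and $v=(r,s)$ span a lattice parallelogram with sides $x,y$ and area $2(a+b)$. The same construction with $a,b$ and $2(x+y)$ gives its amicable partner.
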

The proof is virtually identical with the proof in \cite{AC1}. We provide it here for completeness.
\begin{proof}
Suppose $P$ is a lattice parallelogram with side lengths $x, y$; its amicable partner has side lengths $a, b$. The area of $P$ is $hy$ where $h$ is the height of $P$; see Figure 1 below. By amicability, we have $hy=2(a+b)$, so $h^2y^2=4(a+b)^2$. 

\begin{figure}[h]
\quad \begin{tikzpicture}[scale=0.8]
\draw (0,0)--(10,0)--(13,2)--(3,2)--(0,0);
\draw (3,2)--(3,0);
\draw [dashed] (3,2)--(10,0);
\node at (1.5,1.5) {$x$};
\node at (1.5,-0.2) {$u$};
\node at (6.5,1.3) {$d$};
\node at (7.5,2.3){$y$};
\node at (3.2,1){$h$};
\end{tikzpicture}
\caption{An amicable parallelogram}
\end{figure}

Let $d$ be the length of the short diagonal. Since $P$ is a lattice parallelogram, $d^2$ is an integer. Then
\begin{align*}
d^2 &= h^2 + (y-u)^2  \\
&= h^2 + y^2 + u^2 - 2yu\\
&= h^2 + y^2 + x^2 - h^2 - 2yu\\
&= y^2 + x^2 - 2y \sqrt{x^2 - h^2}\\
&= y^2 + x^2 - \sqrt{4x^2y^2 - 4h^2y^2}\\
&= y^2 + x^2 - \sqrt{4x^2y^2 - 16(a+b)^2};\\
\end{align*}
so $\sqrt{4x^2y^2 - 16(a+b)^2}$ is an integer, and hence $4x^2y^2 - 16(a+b)^2$ is a square. This implies that $x^2y^2 -4(a+b)^2$ is a square. Similarly, $a^2b^2 - 4(x+y)^2$ is a square.

Conversely, suppose that $x, y, a, b$ are positive integers where $x^2y^2 -4(a+b)^2$ and $a^2b^2 -4(x+y)^2$ are squares. Let 
\[d_1=\sqrt{x^2+y^2 + 2\sqrt{x^2y^2 - 4(a+b)^2}}.
\]
Note that $d_1$ is a positive real number since the inner square root is actually a (nonnegative) integer by hypothesis. We construct a triangle $T_1$ whose sides are $x, y$, and $d_1$; such a triangle exists because $x,y<d_1<x+y$. (The inequality $d_1<x+y$ holds because $x^2+y^2 + 2\sqrt{x^2y^2 - 4(a+b)^2} < (x+y)^2$ since $\sqrt{x^2y^2 - 4(a+b)^2} < xy$.) Now let $\theta$ denote the angle between sides $x,y$. Note that $\pi/2\leq \theta < \pi$ since $d_1^2 \geq x^2+y^2$. So
\begin{align*}
d_1^2&=x^2+y^2 - 2xy\cos\theta\\
&= x^2+y^2 + 2xy\sqrt{1-\sin^2\theta}\\
&=x^2+y^2 + 2\sqrt{x^2y^2 - x^2y^2\sin^2\theta}.
\end{align*}
Comparing this with the definition of $d_1$, we see that $x^2y^2\sin^2\theta = 4(a+b)^2$, or $\frac12 xy \sin\theta = a+b$. So the area of $T_1$ is $a+b$. Let $P_1$ be the parallelogram consisting of two copies of $T_1$. Then the area of $P_1$ is $2(a+b)$. 

Similarly, we can construct a triangle $T_2$ with sides $a, b$, and \[ d_2:=\sqrt{a^2+b^2 + 2\sqrt{a^2b^2 - 4(x+y)^2}}.\] Let $P_2$ be the parallelogram consisting of two copies of $T_2$. Then the area of $P_2$ is $2(x+y)$. We see that $P_1$ and $P_2$ form an amicable pair.

It remains to show that $P_1$ and $P_2$ are lattice parallelograms. Equivalently, we need to show that $T_1$ and $T_2$ are lattice triangles. But $x^2, y^2$, and $d_1^2$ are integers, and the area of $T_1$ is an integer. So $T_1$ is a geodetic triangle, in the terminology of Paul Yiu \cite{Y}. Thus $T_1$ can be realized as a lattice triangle. Similarly, $T_2$ can also be realized as a lattice triangle. This completes the proof.
\end{proof}

We note in passing that it is possible for  $x^2y^2 -4(a+b)^2$ to be zero. In this case the parallelogram with side lengths $x$ and $y$ is actually a rectangle.

Theorem 1 recasts our geometry problem as a number theory problem. We can then use the arsenal of number theory to tackle our problem; in this case we will use the theory of Pythagorean triples. 

We now specialize to rhombuses, setting $x=y$ and $a=b$. Theorem 1 says that there exists an amicable pair of rhombuses with side lengths $x$ and $a$ if and only if $x^4 - 16a^2$ and $a^4 - 16x^2$ are both squares. From the theory of Pythagorean triples, there are  positive integers $k$, $m$, $n$ (with $m$ and $n$ coprime), such that 
\[
x^2 = k(m^2+n^2), \quad 4a = 2kmn, \eqno{(1\text{a})}
\]
or
\[
x^2 = k(m^2+n^2), \quad 4a = k(n^2-m^2),\eqno{(1\text{b})}
\]
assuming that $n>m$. In the context of equable lattice parallelograms, Aebi and Cairns showed that (1b) can be reduced to (1a). We follow their lead here. 

Suppose (1b) holds. Then $x^4 - 16 a^2 = 2k^2m^2n^2$, so $x$ is even. We now show that $k$ is even. For a contradiction suppose $k$ is odd. Since $x^2=k(m^2+n^2)$ is even, it must be that $m^2+n^2$ is even, so both $m$ and $n$ must be odd. (They can't both be even since they are coprime.) This means $m^2+n^2 \equiv 2 \pmod{4}$, so $x^2 = k (m^2+n^2) \equiv 2 \pmod{4}$, a contradiction. Thus $k$ is indeed even, say $k=2\kappa$. Write $u=m+n$, $v=n-m$, so $m=(u-v)/2$ and $n=(u+v)/2$. Then
\[
x^2 = k(m^2+n^2) = \kappa(u^2+v^2), \quad 4a = k(n^2-m^2) = 2\kappa uv,
\]
which is of the form (1a). Since $m$ and $n$ are coprime, the gcd of $u$ and $v$ is either 1 or 2. If the gcd is 1, then (1a) holds; if the gcd is 2, then we can write $u=2u'$ and $v=2v'$, where $u'$ and $v'$ are coprime. Then (1a) holds with $u'$ and $v'$ in place of $m$ and $n$ and $4\kappa$ in place of $k$. So from now on we will work with (1a).

Since $4a=2kmn$, we have $2a=kmn$, so one of $k$, $m$, $n$ must be even. Suppose first that $k$ is even, say $k=2\kappa$. Then $a=\kappa mn$. By Theorem 1, $a^4 - 16 x^2$ is a square, say $s^2$. Thus
\[
\kappa^4 m^4n^4 - 32\kappa(m^2+n^2) = s^2. \eqno{(*)}
\]

On the other hand, suppose $k$ is odd. Then one of $m$ or $n$ is even; it does no harm to choose $m$ to be even, say $m=2\mu$. Then $a=k\mu n$, and Theorem 1 again implies that
\[
k^4\mu^4 n^4 - 16k(4\mu^2+n^2)=s^2. \eqno{(**)}
\]
Moving forward, we will work mainly with these equations. We will show that each has only one solution.
\end{section}

\begin{section}%
{A Digression into Elliptic Curves}
It might be of interest to note that equations $(*)$ and $(**)$ are related to elliptic curves. Take $(*)$ for example. It can be rewritten as $\alpha^2\beta^2 - 32(\alpha+\beta)=s^2$, where $\alpha=\ka m^2$ and $\beta=\ka n^2$. Viewing this as a quadratic in $\beta$ with integer roots, we see that the discriminant $32^2 +4\alpha^2(32\alpha+s^2)$ must be the square of an integer, say $z^2$. Multiplying by $2^8$, we get
\[
2^{18} + 2^{15}\alpha^3 + 2^{10}s^2\alpha^2 = 2^8z^2,\quad \text{ or } \quad 2^{18} + (2^{5}\alpha)^3 + s^2 (2^{5}\alpha)^2 = (2^4z)^2,
\]
which is of the form $u^2 = v^3 + cv^2 + d$, a form of an elliptic curve. (In more detail, $u=2^4 x$, $v=2^5\alpha$, $c=s^2$, and $d=2^{18}$.) The linear change of variables $X=9v-3s^2$ and $Y=27u$ produces the standard Weierstra\ss\ form $Y^2=X^3 + pX+q$. We thus have a family of elliptic curves parametrized by $s$. 

We will not pursue this idea further since we aim to solve $(*)$ and $(**)$ by elementary means. We tackle each case separately, but we use very similar methods for both.
\end{section}

\begin{section}{$k$ even}
We first take a look at $(*)$. The equation can be rewritten as
\[
(\ka^2 m^2n^2+s)(\ka^2 m^2n^2 - s) = 32 \ka (m^2+n^2) \eqno{(*')}
\]
Our main idea is that the first factor on the left side of $(*')$ is larger than the right side when $\ka$, $m$, and $n$ are large, so $(*')$ can only be satisfied for reasonably small values of $\ka$, $m$, and $n$, and we can check each of these values. 

We first note that since $\ka^2 m^2n^2+s$ and $\ka^2 m^2n^2-s$ have the same parity. Since the right side of $(*')$ is even, it must be that $\ka^2 m^2n^2+s$ and $\ka^2 m^2n^2-s$ are both even (and positive). We then have the following result.

\begin{lemma}\label{evenlem}
Suppose $\ka^2m^2n^2-s=2\beta$ for some positive integer $\beta$. Then $\ka$ is a divisor of  $\beta^2$ and 
\[
\beta^2 + 64 = (\beta\ka m^2 - 8)(\beta\ka n^2 - 8).
\]
If $\ka^2m^2n^2-s\geq 2\beta$, then $\beta^3 + 64 \geq (\beta\ka m^2 - 8)(\beta\ka n^2 - 8)$.
\end{lemma}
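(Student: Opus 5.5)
The plan is to turn $(*')$ into a relation purely in $\beta$, $\ka$, $m$, $n$, and then complete a product. Write $N=\ka^2m^2n^2$ and $S=m^2+n^2$. If $N-s=2\beta$, then $N+s=2N-2\beta$. Substituting into $(*')$ gives $2\beta(2N-2\beta)=32\ka S$, that is,
\[
\beta(\ka^2m^2n^2-\beta)=8\ka(m^2+n^2). \eqno{(\dagger)}
\]

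First I rearrange $(\dagger)$ as $\beta^2=\ka\bigl(\beta\ka m^2n^2-8(m^2+n^2)\bigr)$. The right side is $\ka$ times an integer, so $\ka\mid\beta^2$. This settles the divisibility claim.

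Next I multiply $(\dagger)$ by $\beta$. This gives $\beta^3=\beta^2\ka^2m^2n^2-8\beta\ka(m^2+n^2)$. The right side is exactly what we get by expanding $(\beta\ka m^2-8)(\beta\ka n^2-8)$ and dropping the constant term $64$. Hence
\[
\beta^3+64=(\beta\ka m^2-8)(\beta\ka n^2-8).
\]
So the natural identity carries $\beta^3$, not $\beta^2$. I would state it in that form, since it is also the form that matches the inequality in the second half of the lemma. I expect the displayed $\beta^2$ in the statement to be a slip for $\beta^3$, because the direct substitution only produces the cubic. I would double-check this against small cases before relying on it.

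For the inequality, let $\beta_0=(N-s)/2$, so the identity above holds with $\beta_0$. Now take a positive integer $\beta\le\beta_0$. Consider the quadratic $q(t)=t^2-Nt+8\ka S$. By $(\dagger)$, $\beta_0$ is a root of $q$, and the other root is $N-\beta_0=(N+s)/2\ge\beta_0$. Therefore $q(\beta)\ge 0$ for all $\beta\le\beta_0$. Multiplying by $\beta>0$ gives $\beta^3\ge\beta^2N-8\beta\ka S$. Adding $64$ to both sides and using the same expansion of the product yields $\beta^3+64\ge(\beta\ka m^2-8)(\beta\ka n^2-8)$.

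The only step that needs care is this last one: seeing that the roots of $q$ are $(N\pm s)/2$, so that every $\beta$ up to the smaller root keeps $q$ nonnegative. Everything else is direct algebraic expansion.
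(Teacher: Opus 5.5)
Your proof is correct and follows the paper's route: the same substitution into $(*)$, the same rearrangement $\beta^2=\ka\bigl(\beta\ka m^2n^2-8(m^2+n^2)\bigr)$ for divisibility, and the same multiplication by $\beta$ and expansion to get the product identity. You are right that the displayed $\beta^2+64$ is a typo for $\beta^3+64$: the paper's own proof derives $\beta^3+64$, and its later uses with $\beta=2,3,4$ give $72$, $91$, $128$.

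For the inequality the paper only says ``the same calculation shows''. Your observation that $\beta\le (N-s)/2$ lies at or below the smaller root of $t^2-Nt+8\ka S$ makes that step explicit. Like the paper's implicit squaring of $s\le N-2\beta$, it relies on taking $s\ge 0$, which you use without stating.
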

\begin{proof}
Suppose $\ka^2m^2n^2-s=2\beta$. Then $s=\ka^2m^2n^2-2\beta$ and $(*)$ becomes
\[
\ka^4m^4n^4 - (\ka^2m^2n^2-2\beta)^2 = 4\beta\ka^2m^2n^2 -4\beta^2 = 32\ka(m^2+n^2),
\]
so $\beta^2 = \beta \ka^2m^2n^2 -8\ka (m^2+n^2)$, which implies  $\ka \mid \beta^2$. Continuing the calculation, we have
\begin{align*}
\beta^2 &= \beta \ka^2m^2n^2 -8\ka (m^2+n^2)\\
\beta^3 & = \beta^2 \ka^2m^2n^2 -8\beta\ka (m^2+n^2)\\
&= (\beta\ka m^2 - 8)(\beta\ka n^2 - 8) - 64,
\end{align*}
so $\beta^3 + 64 = (\beta\ka m^2 - 8)(\beta\ka n^2 - 8)$, as required.

If we have $\ka^2m^2n^2-s\geq 2\beta$,  then the same calculation shows that $\beta^3 + 64 \geq (\beta\ka m^2 - 8)(\beta\ka n^2 - 8)$.
\end{proof}
The Lemma basically places a bound on how large $\ka, m, n$ could be in order to solve $(*)$. We provide a more concrete bound in the next Lemma.

\begin{lemma}\label{bound1}
If $(\ka, m, n)$ is a solution to $(*)$, then 
\[
(\ka m^2-2)(\ka n^2 -2)\leq 8. \eqno{(2)}
\]
\end{lemma}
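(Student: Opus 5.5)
The plan is to apply the inequality half of Lemma~\ref{evenlem} with the specific value $\beta=4$. First, a small correction: the displayed identity in the statement of Lemma~\ref{evenlem} should read $\beta^3+64$ rather than $\beta^2+64$, since that is what its proof derives. Now, if a solution $(\ka,m,n)$ of $(*)$ satisfies $\ka^2m^2n^2-s\ge 8 = 2\cdot 4$, then Lemma~\ref{evenlem} gives
\[
4^3+64 = 128 \ \ge\ (4\ka m^2-8)(4\ka n^2-8) = 16(\ka m^2-2)(\ka n^2-2),
\]
which is exactly $(2)$. So the whole task reduces to showing that the actual value $\beta_0 := (\ka^2m^2n^2-s)/2$ satisfies $\beta_0\ge 4$, or else handling directly the solutions with $\beta_0\in\{1,2,3\}$. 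Recall that $\beta_0$ is a positive integer because both factors in $(*')$ are even and positive.

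For each $\beta_0\in\{1,2,3\}$ I would use the equality case of Lemma~\ref{evenlem}:
\[
\beta_0^3+64=(\beta_0\ka m^2-8)(\beta_0\ka n^2-8), \qquad \ka\mid\beta_0^2 .
\]
This leaves only finitely many possibilities, which I would check one by one.
\begin{itemize}
\item For $\beta_0=1$ we need $\ka=1$ and $65=(m^2-8)(n^2-8)$. With positive factors, the factorizations $1\cdot 65$ and $5\cdot 13$ would require $m^2$ and $n^2$ to lie in $\{9,73,13,21\}$, and no pair from these works. With both factors negative, we would need $m^2,n^2<8$, so the product is at most $7\cdot 7<65$, which is impossible.
\item For $\beta_0=2$ we need $\ka\in\{1,2,4\}$ and $18=(\ka m^2-4)(\ka n^2-4)$. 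Running through the factor pairs of $18$, and the negative pairs with small $m,n$, should leave either no solution or only ones that already satisfy $(2)$.
\item For $\beta_0=3$ we need $\ka\in\{1,3,9\}$ and $91=(3\ka m^2-8)(3\ka n^2-8)$. The factor pairs are $1\cdot 91$ and $7\cdot 13$, which force $3\ka m^2\in\{9,99,15,21\}$, and similarly for $n$; this list is short.
\end{itemize}
In every case I would also impose $\gcd(m,n)=1$ and check the original relation $\beta_0^2=\beta_0\ka^2m^2n^2-8\ka(m^2+n^2)$. Any surviving triple will then be verified directly against $(2)$.

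The main obstacle is the bookkeeping in this finite check: keeping track of negative factor pairs, and confirming that any sporadic small-$\beta_0$ solution still obeys $(2)$, rather than being excluded, so that the lemma holds as stated. Once the cases $\beta_0\le 3$ are disposed of, the case $\beta_0\ge 4$ is handled uniformly by the inequality part of Lemma~\ref{evenlem} as above.
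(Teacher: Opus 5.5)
Your proposal is correct and is essentially the paper's proof: rule out $\ka^2m^2n^2-s\in\{2,4,6\}$ via the equality case of Lemma~\ref{evenlem} (using $\ka\mid\beta^2$ and a factor-pair check), then apply the inequality case with $\beta=4$ to get $128\ge 16(\ka m^2-2)(\ka n^2-2)$. The checks you left unfinished for $\beta_0=2,3$ leave no triples at all. For instance, $\ka=2,4$ fail since $4\nmid 18$, and for $\ka=3$, $\beta_0=3$ the only integral option $9m^2-8=1$ forces $n^2=11$. So no sporadic survivors need testing against (2), and your handling of negative factor pairs and of the $\beta^2$ versus $\beta^3$ typo tidies up two points the paper glosses over.
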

\begin{proof}
Our strategy is to show that $\ka^2m^2n^2-s$ cannot be 2, 4, or 6, and then use the second part of Lemma~\ref{evenlem}.

Suppose first that $\ka^2m^2n^2-s=2$. Then by Lemma~\ref{evenlem}, $\ka$ divides 1, so $\ka=1$. The same Lemma then implies $(m^2-8)(n^2-8) = 65 = 5\cdot 13$. Therefore $m^2-8$ is either 1, 5, 13, or 65. The only integer value of $m$ satisfying this condition is $m=3$, but this means $n^2-8=65$, which is impossible. We conclude that $\ka^2m^2n^2 - s >2$.

Now suppose $\ka^2m^2n^2-s=4$. Then $\ka \mid 4$, so $\ka=1, 2$, or $4$.  
Also, $72=(2\ka m^2 - 8)(2\ka n^2 - 8)$, so $(\ka m^2 - 4)(\ka n^2 - 4) =18 = 2\cdot 3^2$. If $\ka=4$, then $16(m^2-1)(n^2-1)=18$, an impossibility. If $\ka=2$, then $4(m^2-2)(n^2-2)=18$, also an impossibility. The only remaining possibility is  $\ka=1$. In this case $m^2-4$ is either 1, 2, 3, 6, 9, or 18, but none of these can be satisfied with an integer value of $m$. We conclude that $\ka^2m^2n^2 - s >4$.

So suppose $\ka^2m^2n^2-s=6$. Lemma~\ref{evenlem} implies that $\ka = 1, 3$, or $9$. We also have $(3\ka m^2-8)(3\ka n^2 -8) = 91 = 7\cdot 13$. If $\ka=9$, then $(3\ka m^2-8)(3\ka n^2 -8) = (27m^2-8)(27n^2-8)\geq 19\cdot 19 > 91$, so $\ka\neq 9$. If $\ka=3$, then $9m^2-8= (3m)^2-8$ is either 1, 7, 13, or 91, but none of these possibilities produces an integer value for $m$. If $\ka=1$, then $3m^2-8$ is either 1, 7, 13, or 91; none of these possibilities produces integer values for $m$. Thus we must have $\ka^2m^2n^2 - s \geq 8$. Lemma~\ref{evenlem} then implies $128 \geq (4\ka m^2 - 8)(4\ka n^2 - 8)$, so $8 \geq (\ka m^2 -2)(\ka n^2 -2)$ as required.
\end{proof}

We are now in a position to solve $(*)$. But we first state a simple little lemma that we will use repeatedly.

\begin{lemma}\label{littlelem}
Suppose $A$, $c$, $d$ are positive integers and $A^2-2cA-d$ is a square. Then $2A\leq (c+1)^2+d$.
\end{lemma}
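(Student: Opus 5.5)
We will prove Lemma~\ref{littlelem} by sandwiching $A^2-2cA-d$ between consecutive squares below $(A-c)^2$. Complete the square:
\[
A^2-2cA-d = (A-c)^2 - (c^2+d).
\]
Since $c,d>0$, this is strictly less than $(A-c)^2$. It is also nonnegative, because it is a square. Write it as $t^2$ with $t\ge 0$ an integer.

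We may assume $A>c$; otherwise $2A\le 2c\le (c+1)^2+d$ and there is nothing to prove. Then $t^2<(A-c)^2$ gives $t<A-c$, so $t\le A-c-1$. Hence
\[
(A-c)^2-(c^2+d) \le (A-c-1)^2 = (A-c)^2 - 2(A-c) + 1 .
\]
This rearranges to $2(A-c)\le c^2+d+1$, that is,
\[
2A \le c^2+2c+1+d = (c+1)^2+d,
\]
which is the claimed inequality.

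The only delicate point is the case split on the sign of $A-c$. Passing from $t<A-c$ to $t\le A-c-1$ requires $A-c$ to be a positive integer, which holds in the case $A>c$. Every other step is direct algebra, so I expect no real obstacle.
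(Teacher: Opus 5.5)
Your proof is correct and follows the paper's own argument. You complete the square to $(A-c)^2-(c^2+d)$, note that this square lies strictly below $(A-c)^2$ and so is at most $(A-c-1)^2$, and then rearrange; your explicit handling of $A\le c$ (a vacuous case, since then $A^2-2cA-d<0$) and your correct constant $c^2+d$ (the paper writes $c+d$, a typo) make your write-up slightly more careful than the published one.
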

\begin{proof}
Say $A^2-2cA-d=(A-c)^2-(c+d)$ is a square $s^2$. Then $s^2$ is a square smaller than $(A-c)^2$. The next square below $(A-c)^2$ is $(A-c-1)^2$, so $s^2\leq (A-c-1)^2$. This means $A^2-2cA-d\leq A^2-2(c+1)A + (c+1)^2$, which implies $2A\leq (c+1)^2 +d$.
\end{proof}

\begin{theorem}
The only solution to $(*)$ is $\ka = 4$, $m=1$, $n=1$.
\end{theorem}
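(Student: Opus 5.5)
We will use Lemma~\ref{bound1} to reduce to finitely many triples $(\ka,m,n)$ and then test each one directly in $(*)$, using Lemma~\ref{littlelem} where it shortens the work. By symmetry of $(*)$ in $m,n$ we may assume $m\le n$; recall also that $\gcd(m,n)=1$.

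The first step is to enumerate the triples allowed by inequality (2), $(\ka m^2-2)(\ka n^2-2)\le 8$. Both factors are integers.

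\emph{Case $\ka m^2\ge 3$.} Then both factors are at least $1$, so $\ka n^2-2\le 8/(\ka m^2-2)$. If $m=1$, this leaves $\ka\le 10$ and $(\ka-2)(\ka n^2-2)\le 8$. This gives finitely many options: $\ka=3$ with $n\le 3$; $\ka=4$ with $n\le 1$ (since $2(4n^2-2)\le 8$ forces $n=1$); and nothing for $\ka\ge 5$ except possibly $n=1$ with small $\ka$. If $m\ge 2$, then $\ka m^2\ge 4$ and $n\ge 3$ by coprimality, and the product becomes far too large; for example $m=2$, $n=3$, $\ka=1$ gives $2\cdot 7=14>8$.

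\emph{Case $\ka m^2\le 2$.} Then $\ka m^2-2\le 0$. If $\ka m^2=2$ the product is $0\le 8$ for every $n$. If $\ka m^2=1$ the product is negative for $n\ge 2$. So the infinite families to handle are $(\ka,m)=(1,1)$, $(2,1)$, $(1,1)$ with arbitrary $n$; note that $\ka m^2=2$ forces $\ka=2$, $m=1$.

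The second step handles the infinite families with Lemma~\ref{littlelem}. For $\ka=1$, $m=1$, equation $(*)$ reads $n^4-32n^2-32=s^2$. Take $A=n^2$, $c=16$, $d=32$; Lemma~\ref{littlelem} gives $2n^2\le 289+32$, so $n\le 12$. For $\ka=2$, $m=1$, it reads $16n^4-64n^2-64=s^2$, i.e. $(4n^2)^2-2\cdot 8\cdot(4n^2)-64$ is a square. With $A=4n^2$, $c=8$, $d=64$, Lemma~\ref{littlelem} gives $8n^2\le 145$, so $n\le 4$.

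The third step is a finite check of all remaining triples in $(*)$. We confirm that $\ka=4$, $m=n=1$ works: $256-256=0$. We also confirm that every other candidate fails, for instance $\ka=1$, $n\le 12$; $\ka=2$, $n\le 4$; $\ka=3$, $n\le 3$; and small $\ka$ with $m=n=1$, where the expression is $\ka^4-64\ka$, a square only for $\ka=4$ once $\ka^4\ge 64\ka$.

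The main obstacle is doing the enumeration from (2) carefully. The nonpositive-factor cases create the infinite families, and they need the separate bounding argument above; the rest is routine arithmetic.
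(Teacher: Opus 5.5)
Your proposal is correct and follows essentially the same route as the paper: it uses bound (2) from Lemma~\ref{bound1}, splits according to whether $\ka m^2-2$ is negative, zero or positive, applies Lemma~\ref{littlelem} to cut the two infinite families $(\ka,m)=(1,1)$ and $(2,1)$ down to $n\le 12$ and $n\le 4$, and finishes with a finite check. The enumeration has some harmless slack, which only enlarges the list of cases you check, and all of those do fail. For $\ka=3$, $m=1$, inequality (2) already forces $n=1$, not $n\le 3$. For $m=n=1$, no $\ka\ge 5$ survives, since $(\ka-2)^2\le 8$. Dividing by $16$ first, as the paper does, gives $n\le 2$ in the $\ka=2$ family.
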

\begin{proof}
The left side of (2) in Lemma~\ref{bound1} is either zero, negative, or positive. If it is zero, then one of the factors, say $(\ka m^2-2)$, is zero. This implies that $\ka =2$ and $m=1$. Then $(*)$ becomes $16n^4 - 64(n^2+1)=s^2$, so $16 \mid s^2$ or $4\mid s$. Write $s=4t$, then $n^4 - 4(n^2+1) = t^2$. By Lemma~\ref{littlelem},  $2n^2\leq 3^2+4=13$. This means $n=1$ or $n=2$, but neither of these values satisfies $(*)$. So the left side of $(2)$ is not zero.

Suppose the left side of $(2)$ is negative. Say $\ka m^2-2<0$. This means $\ka=1$ and $m=1$. Then $(*)$ becomes $n^4 - 32n^2 - 32=s^2$. Lemma~\ref{littlelem} then implies that  $2n^2\le  321$. So $n\leq 12$. We can check easily that none of these values of $n$ (with $\ka=1$ and $m=1$) satisfies $(*)$. Thus $\ka m^2-2$ is positive. Similarly, $\ka n^2-2$ is also positive. 

First consider the case $m=n=1$. (Note that $m$ and $n$ being coprime does not rule out $m=n=1$.) Then (2) becomes $(\ka-2)^2\leq 8$, so $\ka$ is either 3 or 4. Now $(\ka,m,n)=(3,1,1)$ does not satisfy $(*)$. If $\ka=4$, we get the solution mentioned in the theorem.

We now consider the case $mn>1$. We can assume that $m>n\geq 1$. If $m\geq 4$, then $(\ka m^2 -2)(\ka n^2-2)\geq 14\cdot 1$, contradicting equation (2). If $m=3$, then $\ka m^2 - 2 = 9\ka -2$, so in order to satisfy equation $(2)$, we must have $\ka=1$. But then $\ka n^2 -2 = n^2 -2$, so $n\geq 2$. Their product violates equation $(2)$. The only remaining possibility is $m=2$ and $n=1$. Then the left side of $(2)$ becomes $(4\ka -2)(\ka-2)$. For both factors to be positive, we must have $\ka\geq 3$, but these values do not satisfy $(2)$. 

Overall, then, the only solution to $(*)$ is $(\ka,m,n)=(4,1,1)$.
\end{proof}

The solution mentioned in the theorem produces $x=4$ and $a=4$, i.e., two equable squares with side lengths 4. 
\end{section}

\begin{section}{$k$ odd}
We now look at $(**)$. It can be rewritten as
\[
(k^2\mu^2n^2 +s)(k^2\mu^2n^2 - s) = 16k(4\mu^2+n^2) \eqno{(**')}
\]
As before, we can conclude that $k^2\mu^2n^2 - s$ is even. We also have the following version of Lemma~\ref{evenlem}.

\begin{lemma}\label{oddlem}
Suppose $k^2\mu^2n^2 - s = 2\beta$ for some positive integer $\beta$. Then
\[
\beta^3 + 64 = (\beta k\mu^2 -4)(\beta k n^2 -16) \eqno{(3)}
\]
and $k$ divides $\beta^2$. 

If  $k^2\mu^2n^2 - s \geq  2\beta$, then
\[
\beta^3 + 64 \geq (\beta k\mu^2 -4)(\beta k n^2 -16)
\]
\end{lemma}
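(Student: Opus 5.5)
The plan is to mirror the proof of Lemma~\ref{evenlem}: substitute the hypothesis into $(**)$, isolate $\beta^2$, multiply by $\beta$, and factor. For the inequality I will view the resulting relation as a concave quadratic in $\beta$ and show that the actual value of $\beta$ is its smaller root.

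\emph{The equality and $k\mid\beta^2$.} Suppose $k^2\mu^2n^2-s=2\beta$. Substituting $s=k^2\mu^2n^2-2\beta$ into $(**)$ gives
\[
k^4\mu^4n^4-(k^2\mu^2n^2-2\beta)^2=4\beta k^2\mu^2n^2-4\beta^2=16k(4\mu^2+n^2).
\]
Hence
\[
\beta^2=\beta k^2\mu^2n^2-4k(4\mu^2+n^2).
\]
The right side is a multiple of $k$, so $k\mid\beta^2$. Multiplying by $\beta$ yields
\[
\beta^3=\beta^2k^2\mu^2n^2-16\beta k\mu^2-4\beta kn^2 .
\]
Expanding the product gives
\[
(\beta k\mu^2-4)(\beta kn^2-16)=\beta^2k^2\mu^2n^2-16\beta k\mu^2-4\beta kn^2+64,
\]
so $\beta^3+64=(\beta k\mu^2-4)(\beta kn^2-16)$. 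This is (3).

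\emph{The inequality.} Set $\beta_0=(k^2\mu^2n^2-s)/2$, which is a positive integer by the parity remark and satisfies $\beta_0\ge\beta$ by hypothesis. Define
\[
g(t)=tk^2\mu^2n^2-4k(4\mu^2+n^2)-t^2 .
\]
The identity above says $g(\beta_0)=0$. The same algebra, now for an arbitrary positive $\beta$, gives
\[
(\beta k\mu^2-4)(\beta kn^2-16)-(\beta^3+64)=\beta\, g(\beta).
\]
So it suffices to show $g(\beta)\le 0$.

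The function $g$ is a downward parabola with roots $r_1\le r_2$ satisfying $r_1+r_2=k^2\mu^2n^2$. Since $s\ge0$, we have $\beta_0\le k^2\mu^2n^2/2$, so $\beta_0$ lies at or left of the vertex; hence $\beta_0=r_1$, the smaller root. Moreover $g<0$ on $(-\infty,r_1)$, since $g(0)=-4k(4\mu^2+n^2)<0$. Therefore $0<\beta\le\beta_0=r_1$ gives $g(\beta)\le0$, and so $\beta^3+64\ge(\beta k\mu^2-4)(\beta kn^2-16)$.

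The only delicate point is identifying $\beta_0$ as the smaller root, which is what fixes the direction of the inequality. That identification rests on $s\ge 0$, which we may assume by choosing the nonnegative square root in $(**)$.
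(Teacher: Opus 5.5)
Your proof is correct and follows the paper's: substitute $s=k^2\mu^2n^2-2\beta$ into $(**)$, read off $k\mid\beta^2$, then multiply by $\beta$ and factor. For the inequality, the paper reruns the same calculation with $\ge$ in place of $=$, which amounts to squaring $0\le s\le k^2\mu^2n^2-2\beta$. Your parabola argument is that same step rewritten, since $4g(\beta)=s^2-(k^2\mu^2n^2-2\beta)^2$, and both versions rely on taking $s\ge 0$.
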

\begin{proof}
The proof is the same as the proof of Lemma (2). If $k^2\mu^2n^2 - s = 2\beta$, then $(**)$ becomes 
\begin{align*}
k^4\mu^4n^4 - s^2 &= k^4\mu^4n^4 - (k^2\mu^2n^2 -2\beta)^2 \\
16k(4\mu^2+n^2)&= 4\beta k^2\mu^2n^2 -4\beta^2,\\
\beta^3 &= \beta^2 k^2\mu^2n^2 - 4k\beta(4\mu^2+n^2)\\
&=(\beta k\mu^2 -4)(\beta k n^2 -16) - 64\\
\beta^3 + 64 &= (\beta k\mu^2 -4)(\beta k n^2 -16),
\end{align*}
as required. The second line shows that $k$ is a divisor of $\beta^2$. 

The same calculation proves the second part of the Lemma.
\end{proof}

We also have the following version of Lemma~\ref{bound1}.

\begin{lemma}\label{bound2}
If $(\ka, m, n)$ is a solution to $(**)$, then 
\[
(5k\mu^2-4)(5k n^2 -16)\leq 189. \eqno{(4)}
\]
\end{lemma}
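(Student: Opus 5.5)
Mirroring the proof of Lemma~\ref{bound1}, I will show that $k^2\mu^2n^2-s$ cannot equal $2$, $4$, $6$ or $8$, so that $k^2\mu^2n^2-s\ge 10$. Then I apply the second part of Lemma~\ref{oddlem} with $\beta=5$:
\[
189 = 5^3+64 \ge (5k\mu^2-4)(5kn^2-16),
\]
which is exactly (4). We already know that $k^2\mu^2n^2-s$ is even and positive, so these four small values are the only ones to exclude. In each case I write $k^2\mu^2n^2-s=2\beta$ and use the first part of Lemma~\ref{oddlem}. This gives two constraints: $k\mid\beta^2$ with $k$ odd, and the exact factorization $\beta^3+64=(\beta k\mu^2-4)(\beta kn^2-16)$.

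For $\beta=1$, we get $k=1$ and $65=(\mu^2-4)(n^2-16)$. The first factor must be $\pm1,\pm5,\pm13,\pm65$ (or negative values such as $-3$ when $\mu=1$). With $\mu=1$ the first factor is $-3$, which does not divide $65$. With $\mu=3$ it is $5$, so $n^2=29$, which is impossible. Negative divisors give no further $\mu$, so I need only check that $\mu^2=4\pm d$ is never a square for the remaining $d$.

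For $\beta=2$, $k$ is odd and divides $4$, so $k=1$. Then $72=(2\mu^2-4)(2n^2-16)$, that is, $18=(\mu^2-2)(n^2-8)$. I run through the divisors of $18$ (including $\mu=1$, where the first factor is $-1$, forcing $n^2=-10$) and confirm there is no solution. For $\beta=3$, $k\in\{1,3,9\}$ and $91=(3k\mu^2-4)(3kn^2-16)$. I try each $k$, using that $91=7\cdot13$ and that the first factor is at least $3k-4$. For $\beta=4$, $k=1$ and $128=(4\mu^2-4)(4n^2-16)$, that is, $8=(\mu^2-1)(n^2-4)$. Here $\mu=1$ gives $0$, and $\mu^2-1\in\{3,8\}$ gives $\mu=2$ (with $n^2=4+8/3$, impossible) or $\mu=3$ (with $n^2=5$, impossible).

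I expect the main obstacle to be the sign issues. Unlike in Lemma~\ref{bound1}, the factors $\beta k\mu^2-4$ and $\beta kn^2-16$ can be negative or zero for small $\mu,n$. So in each case I must allow both factors to be negative, or the product to be zero. A zero product is impossible because $\beta^3+64>0$. Both factors being negative requires $\beta k\mu^2<4$ and $\beta kn^2<16$, and this yields a finite list of $(k,\mu,n)$ that can be checked directly against the factorization. I will also use that $n$ is odd and coprime to $2\mu$, since $m=2\mu$ and $n$ are coprime; this prunes several candidates, for instance $n^2=4$.
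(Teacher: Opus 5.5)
Your proposal is correct and has the same structure as the paper's proof. You exclude $k^2\mu^2n^2-s\in\{2,4,6,8\}$ using $k\mid\beta^2$ and the exact identity of Lemma~\ref{oddlem}, then apply its inequality with $\beta=5$. The only difference is how the small cases are eliminated: you enumerate divisors (as in Lemma~\ref{bound1}), which is what creates the sign and zero bookkeeping you anticipate. The paper instead reduces modulo $3$ (for $\beta=1,2,4$) and modulo $4$ (for $\beta=3$, $k=1$), with size bounds for $k=3,9$, which rules out all $\mu,n$ at once regardless of sign.
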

\begin{proof}
As before, we show that $k^2\mu^2n^2 - s \geq 10$ and then use Lemma~\ref{oddlem}. We use a somewhat different method for the proof, utilizing modular reduction instead of factorization. 

Suppose first that $k^2\mu^2n^2 - s = 2$. Then Lemma~\ref{oddlem} says that $k=1$ and that $(\mu^2-4)(n^2-16)=65$. We reduce this equation modulo 3, getting $(\mu^2-1)(n^2-1)\equiv -1 \pmod{3}$. A square mod 3 is either 0 or 1, so $\mu^2-1$ and $n^2-1$ are each either $-1$ or $0$, hence their product cannot be $-1$ mod 3.

Suppose now that $k^2\mu^2n^2 - s = 4$. Then Lemma~\ref{oddlem} says that $k$ is either 1, 2, or 4; since $k$ is odd, it must be that $k=1$. The lemma also says that $(2\mu^2-4)(2n^2-16)= 72$, or $(\mu^2 - 2)(n^2 - 8) = 18$.  We again reduce modulo 3; then $\mu^2-2 \equiv 1$ or $2 \pmod{3}$, and similarly with $n^2-8$. Their product is never $0 \pmod{3}$. 
Thus there is no solution with both $\mu$ and $n$ integers. 

Next, suppose $k^2\mu^2n^2 - s = 6$. Lemma~\ref{oddlem} implies that $k=1, 3$, or $9$, and that $(3k\mu^2 - 4)(3kn^2 - 16) = 91 = 7\cdot 13$. If $k=9$, the left side is at least $23\cdot 11 > 91$, contradicting equation (3). If $k=3$, we have $(9\mu^2 - 4)(9n^2 -16) = 91$. Clearly $n>1$, but then $(9\mu^2 - 4)(9n^2 -16)\geq 5\cdot 20 > 91$, again contradicting equation (3). If $k=1$, then $(3\mu^2 - 4)(3n^2 -16)=91$. Reducing modulo 4, we get $(\mu n)^2 \equiv -1 \pmod{4}$, a contradiction. 

Finally, suppose that $k^2\mu^2n^2 - s = 8$. In this case Lemma~\ref{oddlem} implies that $k=1$ or $k=2^u$, $1\leq u \leq 4$. Since $k$ is odd, we must have $k=1$. The Lemma also implies that $(4\mu^2-4)(4n^2-16) = 128$, or $(\mu^2-1)(n^2-4)=8$. We reduce modulo 3. Now $n^2-4\equiv n^2-1\equiv 0 \text{ or } 2 \pmod{3}$, and similarly for $\mu^2-1$. Their product is either $0$ or $1 \pmod{3}$, but $8\equiv 2 \pmod{3}$. So there is no integer solution in this case, hence $k^2\mu^2n^2 - s \ge 10$. The second part of Lemma~\ref{oddlem} then provides the result.
\end{proof}

\begin{theorem}
The only solution to $(**)$ is $k=5, \mu=1, n=1$.
\end{theorem}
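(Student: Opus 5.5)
The plan mirrors the proof of the corresponding theorem for $(*)$: we combine the bound (4) of Lemma~\ref{bound2} with Lemma~\ref{littlelem} to cut the search down to finitely many triples $(k,\mu,n)$, which we then check by hand. Recall that $k$ is odd, $\mu\ge1$, $n\ge1$, and $n$ is odd, since $m=2\mu$ and $n$ are coprime. The left side of (4) is a product of two factors, and we split according to their signs.

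We first dispose of the cases where a factor is nonpositive. The factor $5k\mu^2-4$ is always at least $1$, so only $5kn^2-16$ can fail to be positive. Since $n$ is odd, this happens only when $n=1$ and $k\in\{1,3\}$. In each of these cases $(**)$ becomes a quartic in $\mu$ of the form $A^2-2cA-d=s^2$ with $A=k^2\mu^2$. For example, with $k=1$, $n=1$ it reads $\mu^4-64\mu^2-16=s^2$, and Lemma~\ref{littlelem} gives $2\mu^2\le 33^2+16$, so $\mu\le 23$. For $k=3$ we get $81\mu^4-192\mu^2-48=s^2$. Here I would take $A=9\mu^2$, so that $2cA=192\mu^2$ forces $c=96/9$, which is not an integer. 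To fix this, multiply through by a suitable square, or absorb the constant and bound $s$ between consecutive squares $(9\mu^2-11)^2$ and $(9\mu^2-10)^2$ directly. Either way $\mu$ is bounded by a small number. The remaining finitely many $\mu$ are then checked directly against $(**)$; I expect none of them to work.

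Now suppose both factors are positive, so $5kn^2-16\ge 5\cdot 1\cdot 9-16=29$ when $n\ge3$, or $5k-16\ge 9$ when $n=1$ (which forces $k\ge5$). If $n\ge3$, then $(5k\mu^2-4)\cdot 29\ge 29>189$ already fails once $5k\mu^2-4\ge 7$, and indeed $5k\mu^2-4\ge1$ leaves only $k=\mu=1$. That gives $1\cdot(5n^2-16)\le189$, so $n\le 6$, i.e.\ $n\in\{3,5\}$, which we check by hand. If $n=1$ and $k\ge5$, then $(5k\mu^2-4)(5k-16)\le189$ with $k=5$ gives $(25\mu^2-4)\cdot 9\le189$, so $\mu=1$. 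Larger odd $k\ge7$ gives $31\cdot19>189$, so it is excluded. This leaves $(k,\mu,n)=(5,1,1)$, and plugging it into $(**)$ gives $625-400=225=15^2$, a genuine solution.

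The main obstacle is the nonpositive-factor case with $n=1$, $k=3$ (and similarly $k=1$), where Lemma~\ref{littlelem} does not apply verbatim because of non-integral coefficients. I would handle it with the direct squeeze between consecutive squares, which Lemma~\ref{littlelem}'s proof really is, and then finish with a short finite check.
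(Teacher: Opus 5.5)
Your proof is correct and is essentially the paper's argument. You use bound (4) from Lemma~\ref{bound2}, Lemma~\ref{littlelem} for $k=1$, $n=1$ (giving $\mu\le 23$), the same consecutive-squares squeeze for $k=3$, $n=1$, and a finite check. The only difference is that you organize by the sign of $5kn^2-16$ rather than by the value of $k$. This lets you replace the paper's factorization of $(n^2-8)^2-s^2=128$ (for $k=\mu=1$) by a direct check of $n\in\{3,5\}$, which gives $-127$ and $161$, neither a square.

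The deferred finite checks all fail as you expect. The slip ``$\ge 29>189$'' should read ``$\ge 7\cdot 29=203>189$ once $5k\mu^2-4\ge 7$''.
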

\begin{proof}
First, note that $n$ has to be odd, since $n$ and $m=2\mu$ are coprime. Our main tool is equation (4) in Lemma~\ref{bound2}. If $k\geq 6$, then $(5k\mu^2-4)(5kn^2-16)\geq 26\cdot 14 > 189$, contradicting equation (4). So $k$ must be either $1$, $3$, or $5$. 

If $k=1$, then equation (4) becomes $(5\mu^2-4)(5n^2-16)\leq 189$. If $n=1$, this inequality is automatically satisfied for all values of $\mu$.  But in this case $(**)$ becomes $\mu^4 - 64\mu^2-16=s^2$, and Lemma~\ref{littlelem} implies that $2\mu^2 \le 1105$, which means $\mu\leq 23$.  We check all these values; none satisfies $(**)$. So there is no solution with $k=1$ and $n=1$.

We now try $k=1$, $\mu=1$. For a brief change of pace we skip the use of Lemma~\ref{littlelem}. In this case $(**)$ becomes $n^4 - 16n^2 -64=s^2$, or $(n^2-8)^2 - s^2 = 128$. Thus $(n^2-8 + s)(n^2-8-s)=128$. Since 128 can be factored as $128\cdot 1$, $64\cdot 2$, $32\cdot 4$, and $16\cdot 8$, the possible values of $n^2-8$ are $129/2$, $66/2$, $36/2$, and $24/2$. None of these produce integer values of $n$. Thus there is no solution with $k=1$ and $\mu=1$.

Therefore if $k=1$, we must have $n\geq 3$ and $\mu\geq 2$. This means the left side of (4) is $(5\mu^2 - 4)(5n^2-16) \geq 16\cdot 29 > 189$, so these values of $n$ and $\mu$ do not satisfy (4). 

Next we look at $k=3$. Here equation (4) becomes $(15\mu^2 -4)(15n^2-16)\le 189$. If $n\geq 3$, then the left side is at least $11\cdot 119 >189$, so  there is no solution with $k=3$ and $n\geq 3$. It remains to tackle the troublesome case $n=1$.  Here $(**)$ becomes $81\mu^4 - 192 \mu^2 - 48 = s^2$. We can't really use Lemma~\ref{littlelem}, since $192/9$ is not an integer. A small modification works. We have $s^2=(9\mu^2-10)^2 - 12\mu^2 - 148$, so $s^2$ is at most $(9\mu^2-11)^2= 81\mu^4 - 198 \mu^2 + 121$. Consequently $81\mu^4 - 192 \mu^2 - 48 \leq 81\mu^4 - 198 \mu^2 + 121$, or $6\mu^2 \leq 169$. This means $\mu\leq 5$. We check easily that $k=3$, $n=1$, and $\mu\leq 5$ do not satisfy $(**)$. Thus we rule out $k=3$. 

When $k=5$,  equation (4) becomes $189 \geq (25\mu^2 - 4)(25n^2-16)$. If either $\mu>1$ or $n>1$, then the right side is larger than $(25-4)(25-16)=189$, a contradiction. So $\mu=1$ and $n=1$. We check that $(k,\mu,n)=(5,1,1)$ does satisfy $(**)$.  This concludes the proof.
\end{proof}

This solution produces $x=5$ and $a=5$, two equable rhombuses with side length 5 and area 20. In conclusion, then, all amicable lattice rhombuses are actually equable.

\end{section}

\bigskip
\noindent \textsc{Franklin \& Marshall College}\\
ipraton@fandm.edu

\end{document}